\documentclass[12pt]{article}
\usepackage{times}
\usepackage{amsmath,amsthm,amsfonts,amssymb}
\usepackage{txfonts}
\usepackage{cite}
\usepackage{indentfirst}
\usepackage{mathtools}
\usepackage[colorlinks=true,anchorcolor=blue,filecolor=blue,
linkcolor=blue,urlcolor=blue,citecolor=blue]{hyperref}
\evensidemargin=\oddsidemargin\topmargin=-1.5cm

\newtheorem{thm}{Theorem}[section]
\newtheorem{lem}[thm]{Lemma}
\newtheorem{cor}[thm]{Corollary}

\newtheorem{conj}{Conjecture}[section]
\theoremstyle{definition}
\newtheorem{definition}[thm]{Definition}
\theoremstyle{remark}

\def \A{\mathcal{A}}

\begin{document}

\title{Tensor Spectral Stability for Uniform Hypergraphs with Bounded Matching Number}
\author{Yi Xu, Yi-Zheng Fan\thanks{Corresponding author. Supported by National Natural Science Foundation of China (No. 12331012).
Email: yxu123@stu.ahu.edu.cn, fanyz@ahu.edu.cn}\\
\small \it Center for Pure Mathematics, School of Mathematical Sciences, \\ \small \it Anhui University, Hefei 230601, P. R. China}
\date{}
\maketitle

\begin{abstract}
We establish a tensor spectral stability theorem for uniform hypergraphs with bounded matching number.
More precisely, for fixed integers $k\geq 3$ and $\beta\geq2$, and sufficiently large $n$, we prove that every $n$-vertex $k$-uniform hypergraph $H$ with matching number at most $\beta$ and tensor spectral radius close to the maximum possible value among all such hypergraphs must be structurally close to the extremal hypergraph $S_{n,k,\beta}$, whose edges consist of all $k$-sets intersecting a fixed set of $\beta$ vertices.
Furthermore, we show that every edge of $H$ intersects this distinguished vertex set and that $H$ contains all but a small proportion of the edges of $S_{n,k,\beta}$.
As an application, we obtain a new proof of the spectral version of the Erdős matching conjecture for sufficiently large $n$.

\noindent
\textbf{Keywords}: Uniform hypergraph; adjacency tensor; spectral radius; stability; matching number

\noindent
\textbf{MSC 2020}: 05C65, 05C35, 15A69
\end{abstract}

\section{Introduction}
A hypergraph $H=(V(H),E(H))$ consists of a vertex set $V(H)$ and an edge set $E(H)$, where each edge is a subset of $V(H)$.
If every edge has size $k$, then $H$ is called a \emph{$k$-uniform hypergraph}, or simply a \emph{$k$-graph}.
A \emph{matching} in $H$ is a collection of pairwise disjoint edges.
The \emph{matching number} of $H$, denoted by $\mu(H)$, is the maximum size of matchings in $H$.

A central problem in extremal hypergraph theory is to determine the maximum number of edges in a uniform hypergraph with bounded matching number.
In 1965, Erd{\H{o}}s~\cite{Erdos1965} proposed the following conjecture.

\begin{conj}\label{conj:EMC}
Let $k\geq 2$, $\beta\geq 1$, and $n\geq k(\beta+1)-1$ be integers.
If $H$ is an $n$-vertex $k$-graph with $\mu(H)\leq\beta$, then
$$
|E(H)|\leq\max\left\{\binom{k(\beta+1)-1}{k},\binom{n}{k}-\binom{n-\beta}{k}\right\}.
$$
\end{conj}

The conjecture is known to hold for $k=2$~\cite{ErdosGallai1959} and $k=3$~\cite{LuczakMieczkowska2014,Frankl2017}.
For a general $k$, Erd{\H{o}}s~\cite{Erdos1965} proved the conjecture when $n$ is sufficiently large.
The lower bound on $n$ was later improved in a series of works; see, for example, \cite{Frankl2013,Frankl2017,FranklKupavskii2022}.
Nevertheless, the conjecture remains open in full generality.

Spectral extremal problems ask for the maximum spectral radius over a class of graphs or hypergraphs satisfying a given combinatorial
restriction.
In 2005, Qi~\cite{Qi2005} and Lim~\cite{Lim2005} independently introduced the eigenvalues of tensors.
In 2012, Cooper and Dutle~\cite{CooperDutle2012} defined the adjacency tensor of a uniform hypergraph and developed a spectral theory for uniform hypergraphs.
These definitions make it possible to study spectral extremal problems for uniform hypergraphs.
In this paper, the spectral radius of a uniform hypergraph always refers to the spectral radius of the adjacency tensor of the hypergraph.

Keevash, Lenz, and Mubayi~\cite{KeevashLenzMubayi2014} gave two general criteria for deriving spectral extremal results from stability results for the corresponding classical extremal problems.
Cooper, Desai, and Sahay~\cite{CooperDesaiSahay} studied the principal eigenvectors in hypergraph Tur\'an problems and showed that their entries are close
to being equal for a general class of extremal hypergraphs.
She, Fan, and Kang~\cite{SheFanKang} established a relation between the spectral radius of a linear hypergraph and its local structure through walks.
More recently, Zheng, Li, and Fan~\cite{ZhengLiFan} extended the general results of Keevash, Lenz, and Mubayi and obtained spectral Tur\'an results for several classes of uniform hypergraphs.
For the matching problem in graphs, Feng, Yu, and Zhang~\cite{FengYuZhang2007} determined the maximum spectral radius of an $n$-vertex graph with a given matching number and characterized the extremal graphs.

Nikiforov~\cite{NikiforovStability} proved a spectral stability theorem for forbidden-subgraph problems.
For problems involving bounded matching number, Jiang, Yuan, and Zhai~\cite{JiangYuanZhai} used a stability result to characterize exact spectral extremal graphs.
Wu, Kang, and Ni~\cite{WuKangNi} later proved a general spectral stability theorem that applies, in particular, to graphs with bounded matching number.
For uniform hypergraphs, Liu, Ni, Wang, and Kang~\cite{LiuNiWangKang} established $p$-spectral versions of stability theorems for two classes of expanded hypergraphs.
These results motivate the study of spectral stability for uniform hypergraphs with bounded matching number.

Recently, by using shifting method, Kang, Lu, Yuan, and Zhou \cite{KangLuYuanZhou2026} determined the maximum spectral radius of an $n$-vertex $k$-graph with matching number at most $\beta$ for sufficiently large $n$ and characterized the unique extremal hypergraph.
%Their proof is based on the shifting method.
In this paper, we develop a spectral stability method based on normalized nonnegative eigenvectors.
Unlike the shifting approach, our method directly exploits the distribution of the principal eigenvector and may be applicable to other spectral extremal problems for uniform hypergraphs.
As a direct application, our result implies the spectral version of the Erd{\H{o}}s matching conjecture for sufficiently large $n$.

Let $[n]:=\{1,2,\ldots,n\}$.
Let $S_{n,k,\beta}$ be the $k$-graph on $[n]$ whose edge set consists of all $k$-sets intersecting $[\beta]$; that is,
$$
E(S_{n,k,\beta})=\bigg\{e\in\binom{[n]}{k}:e\cap[\beta]\neq\emptyset\bigg\}.
$$
The spectral radius of $S_{n,k,\beta}$ is denoted by $\rho(S_{n,k,\beta})$.
For two sets $A$ and $B$, write $A \Delta B:=(A \setminus B) \cup (B \setminus A)$.
Our main result is stated as follows.

\begin{thm}
\label{thm:spectral-stability}
Let $k\geq3$ and $\beta\geq2$ be fixed integers.
There exist constants $C=C(k,\beta)>0$ and $n_0$ such that the following holds whenever $n\geq n_0$, and $\delta\geq0$ is sufficiently small.
If $H$ is a $k$-graph on $[n]$ satisfying $\mu(H)\leq\beta$ and
$$
\rho(H)\geq(1-\delta)\rho(S_{n,k,\beta}),
$$
then  there exists $W\in\binom{[n]}{\beta}$ such that every edge of $H$ intersects $W$.
After relabeling the vertices of $H$, we have
$$
\left|E(H)\triangle E(S_{n,k,\beta})\right|\leq C\delta n^{k-1}.
$$
\end{thm}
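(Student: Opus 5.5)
Throughout, $x$ denotes the nonnegative eigenvector of $\mathcal{A}(H)$ for $\rho(H)$, normalized by $\sum_i x_i^k=1$. It satisfies $\rho(H)=k\sum_{e\in E(H)}\prod_{i\in e}x_i$ and the eigen-equations $\rho x_v^{k-1}=\sum_{e\ni v}\prod_{i\in e\setminus\{v\}}x_i$. The plan has four steps.

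\textbf{Step 1 (size of $\rho(S_{n,k,\beta})$).} First I would estimate $\rho(S_{n,k,\beta})$. Use a test vector that is constant $a$ on $[\beta]$ and constant $b$ elsewhere, and compare it with the two-value eigen-equations. This should give $\rho(S_{n,k,\beta})=\Theta(n^{(k-1)/k})$. In the principal eigenvector, the $\beta$ centre vertices carry entries of order $n^{-1/k}\cdot n^{(k-1)/k^2}\cdots$, and the precise asymptotics come from solving the two-variable system. The key qualitative fact is that the centre entries are polynomially larger than the others, and a constant fraction of $\sum x_i^k$ sits on the centre.

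\textbf{Step 2 (structure forced by the matching number).} Let $M$ be a maximum matching and $U=V(M)$, so $|U|\le k\beta$ and every edge meets $U$. Then
\[
\rho(H)\le k\sum_{e}\prod x_i\le k\sum_{u\in U}x_u\sum_{e\ni u}\prod_{e\setminus u}x_i=\rho\sum_{u\in U}x_u^{k}.
\]
Combined with an upper bound on $x_v$ for $v\notin U$, obtained from the eigen-equation since every edge at $v$ meets $U$, this shows the vertices of $U$ must carry large entries. Next define $W$ as the set of vertices $u$ with $x_u\ge \epsilon\max_i x_i$, equivalently those of degree $\Omega(n^{k-1})$. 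The usual argument shows $|W|\le\beta$: if $\beta+1$ vertices each had degree $\Omega(n^{k-1})$, we could greedily pick disjoint edges through them and get a matching of size $\beta+1$. Then compare $\rho(H)$ with the optimum for $|W|=t$. Bound $\rho(H)$ above by the spectral radius of the hypergraph in which all edges meet $W$, plus the contribution of the edges avoiding $W$. Those edges lie inside $U$ apart from low-degree structure and contribute $o(\rho)$ relative to the gap. If $|W|\le\beta-1$, the bound is at most $\rho(S_{n,k,\beta-1})+o(\cdot)\le(1-c)\rho(S_{n,k,\beta})$, a contradiction for small $\delta$. Hence $|W|=\beta$.

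\textbf{Step 3 (every edge meets $W$).} Suppose an edge $e$ misses $W$. Each $w\in W$ has degree $\Omega(n^{k-1})$, so we can greedily choose $\beta$ pairwise disjoint edges, one through each $w\in W$, all avoiding $e$ (only $O(n^{k-2})$ edges through $w$ meet a fixed bounded set). Together with $e$ these form a matching of size $\beta+1$, a contradiction. This is where the high-degree lower bound of Step 2 is essential.

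\textbf{Step 4 (counting missing edges).} Now $H\subseteq S_{n,k,\beta}$ with centre $W$, so $|E(H)\triangle E(S)|=|E(S)\setminus E(H)|$. Let $y$ be the Perron vector of $S$. Apply Rayleigh-type monotonicity: for $H\subseteq S$, $\rho(S)-\rho(H)\ge$ a constant multiple of the weight $\sum_{e\in S\setminus H}\prod y_i$. I would prove this by applying the Rayleigh quotient to $y$ for $H$, using $\rho(H)\ge k\sum_{E(H)}\prod y_i=\rho(S)-k\sum_{\text{missing}}\prod y_i$. That inequality points the wrong way, so instead I would plug $x$ into $S$ and use a stability estimate showing $x$ is close to $y$. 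Every missing edge contains a centre vertex and $k-1$ non-centre vertices (at least), so it has weight $\gtrsim y_{\rm c}\,y_{\rm o}^{k-1}$. Then $\delta\rho(S)\gtrsim m\,y_{\rm c}y_{\rm o}^{k-1}$, and the scalings from Step 1 give $m\le C\delta n^{k-1}$.

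\textbf{Main obstacle.} I expect the hardest parts to be the quantitative estimates on $x$: the upper bound on non-centre entries in Step 2, and especially the closeness of $x$ to $y$ needed in Step 4. I would try to get both through a two-level averaging of the eigen-equations over the centre and non-centre vertices.
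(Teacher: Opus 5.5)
Your Steps 2 and 3 follow the paper's first lemma closely: a threshold set $W$, a greedy matching to get $|W|\le\beta$, a comparison with a $(\beta-1)$-centre bound, and a greedy matching avoiding $e'$. Step 4, however, has a genuine gap.

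Plugging the Perron vector $x$ of $H$ into $S_{n,k,\beta}$ does give the correct inequality
\[
\rho(S_{n,k,\beta})-\rho(H)\ge k\sum_{e\in E(S_{n,k,\beta})\setminus E(H)}x^e .
\]
But turning the right-hand side into $c\,m\,n^{-(k-1)/k}$ needs a \emph{pointwise} lower bound $x_v\gtrsim n^{-1/k}$ at the non-centre vertices of missing edges. That bound is false, so no ``$x$ is close to $y$'' estimate can supply it. Let $H$ be $S_{n,k,\beta}$ with all edges through one vertex $v\notin W$ deleted. Zeroing the $v$-coordinate of the Perron vector $y$ of $S_{n,k,\beta}$ gives $\rho(H)\ge(1-ky_v^k)\rho(S_{n,k,\beta})\ge\bigl(1-\frac{k}{n-\beta}\bigr)\rho(S_{n,k,\beta})$, and $m=\Theta(n^{k-2})$ edges are missing. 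Yet $v$ is isolated in $H$, so $x_v=0$ and your right-hand side is exactly $0$. Away from $v$, $x$ is just the Perron vector of $S_{n-1,k,\beta}$, as close to $y$ as any averaged stability statement could promise; the missing edges simply sit on a low-weight vertex.

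The paper never controls $x$ pointwise. It proceeds as follows.
\begin{enumerate}
\item Split $E(H)$ into $E_j=\{e:|e\cap W|=j\}$ and apply H\"older: $\sum_{e\in E_j}x^e\le|E_j|^{(k-1)/k}\bigl(\sum_{e\in E_j}(x^e)^k\bigr)^{1/k}$.
\item Bound $\sum_{e\in E_j}(x^e)^k$ by Maclaurin's inequality in terms of $P=\sum_{w\in W}x_w^k$ alone.
\item Write $|E_j|=\binom{\beta}{j}\binom{n-\beta}{k-j}-m_j$ and use $(1-s)^{(k-1)/k}\le1-\frac{k-1}{k}s$.
\end{enumerate}
This yields
\[
\rho(H)\le F_n(P)-(k-1)\sum_j m_j\Bigl(\frac{P}{\beta}\Bigr)^{j/k}\Bigl(\frac{1-P}{n-\beta}\Bigr)^{(k-j)/k},
\]
where $F_n(P)$ is the Rayleigh quotient of $S_{n,k,\beta}$ at the two-valued unit vector with centre mass $P$, so $F_n(P)\le\rho(S_{n,k,\beta})$. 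Each missing edge is thus charged at the uniform weight determined by $P$, not at its actual $x$-weight. The only information needed about $x$ is $\frac{1}{2k}<P<1-\frac{1}{2k}$, which follows from near-optimality of $\rho(H)$. This device, which converts a deficit in edge counts into a spectral loss without pointwise eigenvector bounds, is the missing idea.

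There are also three smaller slips.
\begin{itemize}
\item With your normalization, $\rho(S_{n,k,\beta})=\Theta(n^{(k-1)^2/k})$, not $\Theta(n^{(k-1)/k})$. Centre entries are $\Theta(1)$ and the others $\Theta(n^{-1/k})$, and only this scaling yields $m\le C\delta n^{k-1}$.
\item The Step 2 display should end in $k\rho\sum_{u\in U}x_u^k$, giving $\sum_{u\in U}x_u^k\ge 1/k$.
\item Comparing with $\rho(S_{n,k,\beta-1})$ requires the sharp leading coefficient $C_k t^{(k-1)/k}$ of $n^{(k-1)^2/k}$. The paper gets this directly from $\max_P P^{1/k}(1-P)^{(k-1)/k}$.
\end{itemize}
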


Taking $\delta=0$ in Theorem~\ref{thm:spectral-stability} gives the following tensor spectral extremal result of Kang, Lu, Yuan, and
Zhou~\cite{KangLuYuanZhou2026}.

\begin{cor}\label{cor:spec-emc}
Let $k\geq3$ and $\beta\geq2$ be fixed.
For sufficiently large $n$, if $H$ is an $n$-vertex $k$-graph with $\mu(H)\leq\beta$, then
$$
\rho(H)\leq\rho(S_{n,k,\beta}).
$$
Moreover, equality holds if and only if $H\cong S_{n,k,\beta}$.
\end{cor}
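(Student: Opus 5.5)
The plan is to specialize Theorem~\ref{thm:spectral-stability} to $\delta=0$, which the theorem allows since it holds for every sufficiently small $\delta\geq 0$. Let $n_0$ be the threshold given by Theorem~\ref{thm:spectral-stability} for the fixed $k\geq 3$ and $\beta\geq 2$, and let $n\geq n_0$. The only additional tool is the variational description of the spectral radius of the adjacency tensor of a $k$-graph (Cooper--Dutle):
$$
\rho(H)=\max\Big\{k\sum_{e\in E(H)}\prod_{v\in e}x_v:\ x\geq 0,\ \|x\|_k=1\Big\}.
$$
This formula gives monotonicity: if $E(H)\subseteq E(H')$, then for every nonnegative $x$ the sum for $H$ is at most the sum for $H'$, so $\rho(H)\leq\rho(H')$. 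It also gives invariance of $\rho$ under relabeling of vertices.

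\emph{Upper bound.} Let $H$ be an $n$-vertex $k$-graph with $\mu(H)\leq\beta$, and suppose for contradiction that $\rho(H)>\rho(S_{n,k,\beta})$. Then $\rho(H)\geq(1-0)\rho(S_{n,k,\beta})$, so Theorem~\ref{thm:spectral-stability} with $\delta=0$ gives a set $W\in\binom{[n]}{\beta}$ that meets every edge of $H$. Relabel the vertices so that $W=[\beta]$. Then $E(H)\subseteq E(S_{n,k,\beta})$, and monotonicity gives $\rho(H)\leq\rho(S_{n,k,\beta})$, a contradiction. Hence $\rho(H)\leq\rho(S_{n,k,\beta})$. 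In fact the bound $|E(H)\triangle E(S_{n,k,\beta})|\leq 0$ from the theorem already settles this case, but the containment argument is cleaner.

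\emph{Equality.} If $\rho(H)=\rho(S_{n,k,\beta})$, apply the theorem again with $\delta=0$. After relabeling we get $|E(H)\triangle E(S_{n,k,\beta})|\leq C\cdot 0\cdot n^{k-1}=0$, so $E(H)=E(S_{n,k,\beta})$, that is, $H\cong S_{n,k,\beta}$.

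Conversely, $S_{n,k,\beta}$ is admissible. Every edge meets $[\beta]$, so any matching has at most $\beta$ edges, and hence $\mu(S_{n,k,\beta})\leq\beta$. Equality in $\mu$ is not needed, although it holds for $n\geq k\beta$. Trivially $\rho(S_{n,k,\beta})=\rho(S_{n,k,\beta})$, so $S_{n,k,\beta}$ attains the bound.

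There is no real obstacle here: all the difficulty sits inside Theorem~\ref{thm:spectral-stability}. The only point to check is that the phrase ``$\delta\geq 0$ sufficiently small'' in the theorem indeed includes $\delta=0$.
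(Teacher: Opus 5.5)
Your proposal is correct and matches the paper's proof: both apply Theorem~\ref{thm:spectral-stability} with $\delta=0$. It is applied first under the contrary assumption $\rho(H)>\rho(S_{n,k,\beta})$ and then under equality, and $H\cong S_{n,k,\beta}$ is read off after relabeling $W=[\beta]$. The only cosmetic difference is that for the inequality you get the contradiction from $E(H)\subseteq E(S_{n,k,\beta})$ plus monotonicity of $\rho$, whereas the paper uses $|E(H)\triangle E(S_{n,k,\beta})|=0$ directly; and $\delta=0$ is indeed permitted, since the theorem's proof only requires $0\leq\delta\leq\min\{\delta_0,\delta_1\}$.
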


The remainder of the paper is organized as follows.
In Section~2, we introduce some notions and estimates used in the proof.
In Section~3, we first find the set $W$ in Theorem~\ref{thm:spectral-stability},
  and then estimate the number of edges missing from $S_{n,k,\beta}$.
These two steps complete the proof of main theorem.

\section{Preliminaries}\label{sec2}
An order-$k$ and dimension-$n$ \emph{tensor} $\A$ over $\mathbb{C}$ is a multidimensional array $\A=(a_{i_1i_2\ldots i_k})$ with $n^k$ complex entries, where $i_j\in[n]$ for each $j\in[k]$.
A tensor is called \emph{symmetric} if its entries are invariant under any permutation of the indices.
The notions of tensor eigenvalues and eigenvectors were introduced independently by Qi \cite{Qi2005} and Lim \cite{Lim2005}.

\begin{definition}
Let $\A=(a_{i_1i_2\cdots i_k})$ be an order-$k$ and dimension-$n$ tensor, where $k\geq2$.
If there exist a complex number $\lambda \in \mathbb{C}$ and a nonzero vector $x=(x_1,x_2,\ldots,x_n)\in\mathbb{C}^n$ such that
$$
\A x^{k-1} = \lambda x^{[k-1]},
$$
then $\lambda$ is called an \emph{eigenvalue} of $\A$, and $x$ is an \emph{eigenvector} of $\A$ corresponding to $\lambda$.
Here $\A x^{k-1} \in \mathbb{C}^n$ is defined by
$$
(\A x^{k-1})_i= \sum_{i_{2}, \ldots, i_{k} \in [n]} a_{i i_{2} \cdots i_{k}} x_{i_{2}} \cdots x_{i_{k}}, \quad i \in [n],
$$
and $x^{[k-1]}:=(x_1^{k-1},\ldots, x_n^{k-1})$.
\end{definition}

The \emph{spectral radius} of $\A$, denoted by $\rho(\A)$, is defined as the maximum modulus of the eigenvalues of $\A$.
By the Perron-Frobenius theorem of nonnegative tensors \cite{chang2008perron,friedland2013perron,YY2010,YY2011,YY2011-2},
if $\A$ is nonnegative, then $\rho(A)$ is associated with a nonnegative eigenvector.
Moreover, if $\A$ is further weakly irreducible \cite{friedland2013perron}, then $\rho(\mathcal{A})$ is the unique eigenvalue of $\mathcal{A}$ associated with a positive eigenvector.

For nonnegative and symmetric tensor, we have the following optimization result.
Let $\mathbb{R}_{+}^n$ be the set of nonnegative real vectors in $\mathbb{R}^n$.
Qi \cite{Qi2013} established the following result for nonnegative symmetric tensors, where the $\ell_k$-norm of a vector $x=(x_{1}, x_{2}, \ldots, x_{n})$ is defined by $\|x\|_k=\left(\sum_{i=1}^n |x_i|^k \right)^{1/k}$.

\begin{lem}[\cite{Qi2013}]\label{QiSpec}
Let $\A$ be an order-$k$ and dimension-$n$ nonnegative symmetric tensor, where $k\geq2$.
Then
$$
\rho(\A) = \max \bigg\{ x^\top \A x^{k-1}:=\sum_{i_{1}, \cdots, i_{k} \in [n]} a_{i_{1} i_{2} \cdots i_{k}} x_{i_{1}} x_{i_{2}} \cdots x_{i_{k}}: x\in \mathbb{R}_{+}^n, \|x\|_k=1\bigg\}.
$$
\end{lem}

Let $H$ be a $k$-graph on the vertex set $[n]$.
Its adjacency tensor $\mathcal{A}(H)=(a_{i_1\ldots i_k})$ is the order-$k$ and dimension-$n$ symmetric tensor defined by
$$
a_{i_1\ldots i_k}=
\begin{cases}
\dfrac{1}{(k-1)!},
&  \text{if~} \{i_1,\ldots,i_k\}\in E(H),\\[4pt]
~ 0, & \text{otherwise}.
\end{cases}
$$
 Clearly, $\A(H)$ is nonnegative and symmetric, and it is weakly irreducible if and only if $H$ is connected \cite{friedland2013perron, YY2011-2}.
The \emph{spectral radius} of $H$, denoted by $\rho(H)$, is the spectral radius of $\mathcal{A}(H)$.
For a subset $S\subseteq V(H)$, write $x^S:=\prod_{v\in S}x_v$.
In particular, for an edge $e\in E(H)$, we write $x^e=\prod_{v\in e}x_v$.
By Lemma~\ref{QiSpec}, we have
$$
\rho(H)=\max\left\{k\sum_{e\in E(H)}x^e: x\in\mathbb R_+^n,\ \|x\|_k=1\right\}.
$$
Consequently, there is a normalized nonnegative unit eigenvector $x$ of $H$ satisfying
\begin{equation}\label{eq:eigen-equation}
\rho(H)x_u^{k-1}= \sum_{e\in E(H): ~ u\in e} x^{e\setminus\{u\}}
\quad
\text{for every }u\in V(H).
\end{equation}
Multiplying \eqref{eq:eigen-equation} by $x_u$ and summing over $u$ gives
\begin{equation}\label{eq:rayleigh-at-eigenvector}
  \rho(H)=k\sum_{e\in E(H)}x^e.
\end{equation}

We give an upper bound and a lower bound for $\rho(S_{n,k,\beta})$.
Although the following estimate is not asymptotically tight, it is sufficient for our purpose.

\begin{lem}\label{lem:spectral-order}
Put $C_k=\frac{(k-1)^{(k-1)/k}}{(k-1)!}$.
For sufficiently large $n$, we have
\begin{equation}\label{ieq:spec-bounds}
C_k\beta^{(k-1)/k}n^{(k-1)^2/k}-O\left(n^{(k-1)^2/k-1}\right)
\leq \rho(S_{n,k,\beta})
\leq \frac{k}{(k-1)!}\beta^{(k-1)/k}n^{(k-1)^2/k}.
\end{equation}
\end{lem}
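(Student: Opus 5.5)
Both bounds come from the variational characterization $\rho(H)=\max\{k\sum_{e\in E(H)}x^e : x\in\mathbb R_+^n,\ \|x\|_k=1\}$ that follows from Lemma~\ref{QiSpec}. For the lower bound we plug in a suitable test vector. For the upper bound we bound $\sum_e x^e$ for an arbitrary nonnegative unit vector, using the structure of $S_{n,k,\beta}$: every edge contains at least one vertex of $[\beta]$.

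\textbf{Lower bound.} Take $x_i=a$ for $i\in[\beta]$ and $x_j=b$ for $j\notin[\beta]$, with $\beta a^k+(n-\beta)b^k=1$. Keep only the edges containing exactly one vertex of $[\beta]$; there are $\beta\binom{n-\beta}{k-1}$ of them. This gives
$$\rho\ge k\beta\binom{n-\beta}{k-1}ab^{k-1}.$$
Set $N=n-\beta$ and $t=\beta a^k$, so that $Nb^k=1-t$. Then
$$ab^{k-1}=\Big(\frac{t}{\beta}\Big)^{1/k}\Big(\frac{1-t}{N}\Big)^{(k-1)/k},$$
and this is maximized at $t=1/k$, where $t(1-t)^{k-1}$ becomes $(k-1)^{k-1}/k^k$. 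Hence
$$\rho\ge \frac{k\beta}{(k-1)!}N^{k-1}(1-O(1/N))\cdot \beta^{-1/k}N^{-(k-1)/k}\frac{(k-1)^{(k-1)/k}}{k}
= C_k\beta^{(k-1)/k}N^{(k-1)^2/k}(1-O(1/N)).$$
Finally, replacing $N$ by $n$ changes the value only by a factor $1-O(1/n)$. This yields the claimed error $O(n^{(k-1)^2/k-1})$.

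\textbf{Upper bound.} Let $x$ be nonnegative with $\|x\|_k=1$. Each edge of $S_{n,k,\beta}$ contains some $i\in[\beta]$, so
$$\sum_e x^e\le \sum_{i\in[\beta]} x_i\sum_{T\in\binom{[n]}{k-1}}x^T\le \Big(\sum_{i\in[\beta]}x_i\Big)\frac{1}{(k-1)!}\Big(\sum_{j}x_j\Big)^{k-1}.$$

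\emph{First attempt (too crude).} By Hölder, $\sum_{i\le\beta}x_i\le \beta^{(k-1)/k}$ and $\sum_j x_j\le n^{(k-1)/k}$. The product is then $\beta^{(k-1)/k}n^{(k-1)^2/k}$, times $1/(k-1)!$, times $k$. This gives exactly $\frac{k}{(k-1)!}\beta^{(k-1)/k}n^{(k-1)^2/k}$.

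The problem is that the first factor is bounded with the full budget $\|x\|_k=1$ on $[\beta]$, and the second factor also uses the full budget. Both are legitimate upper bounds simultaneously, since each Hölder bound only needs $\|x\|_k\le1$, so the product bound is valid. This is exactly why the stated upper bound is not tight; in fact the crude Hölder argument yields the upper bound directly.

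The only care needed is that $\sum_{T}x^T\le (\sum_j x_j)^{k-1}/(k-1)!$, which holds by expanding the multinomial. The main obstacle is really the bookkeeping of the $(1-O(1/n))$ factors in the lower bound; no other step looks delicate.
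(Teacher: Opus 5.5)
Your proposal is correct and follows the paper's proof essentially verbatim. For the lower bound you use the same two-level test vector with $\sum_{w\in[\beta]}x_w^k=1/k$, restricted to edges meeting $[\beta]$ in exactly one vertex. For the upper bound you charge each edge to a vertex of $[\beta]$, expand $(\sum_j x_j)^{k-1}$, and apply H\"older to both factors, exactly as the paper does. The only issue is presentation: what you label the ``first attempt (too crude)'' is in fact the complete proof of the stated upper bound, so the discussion calling it too crude should be deleted.
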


\begin{proof}
Choose $W\in\binom{[n]}{\beta}$ such that $E(S_{n,k,\beta})=\left\{e\in\binom{[n]}{k}:e\cap W\neq\emptyset\right\}$ and
define $y=(y_1,\ldots,y_n)\in\mathbb R_+^n$ by
$$
y_i=
\begin{cases}
\left(\dfrac{1}{k\beta}\right)^{1/k},
& i\in W,\\[6pt]
\left(\dfrac{k-1}{k(n-\beta)}\right)^{1/k},
& i\in[n]\setminus W.
\end{cases}
$$
Then, $\|y\|_k=1$.
By the definition of spectral radius, we have
$$
\begin{aligned}
\rho(S_{n,k,\beta})\geq k\sum_{e\in E(S_{n,k,\beta})}y^e \geq k\sum_{\substack{e\in E(S_{n,k,\beta})\\|e\cap W|=1}}y^e
&=k\beta\binom{n-\beta}{k-1}\left(\frac{1}{k\beta}\right)^{1/k}\left(\frac{k-1}{k(n-\beta)}\right)^{(k-1)/k}\\
&\geq C_k\beta^{(k-1)/k}n^{(k-1)^2/k}-O\left(n^{(k-1)^2/k-1}\right),
\end{aligned}
$$
where
\[
\binom{n-\beta}{k-1}
\sim
\frac{n^{k-1}}{(k-1)!}
\]
and hence
\[
n^{k-1} n^{-(k-1)/k}
=
n^{(k-1)^2/k}.
\]
This proves the lower bound in \eqref{ieq:spec-bounds}.

For the upper bound, let $x=(x_1,\ldots,x_n)$ be a nonnegative eigenvector corresponding to $\rho(S_{n,k,\beta})$, normalized by $\|x\|_k=1$.
Since every edge of $S_{n,k,\beta}$ intersects $W$, we have $|e\cap W|\geq1$ for every $e\in E(S_{n,k,\beta})$.
Then, we obtain
$$
\rho(S_{n,k,\beta})=k\sum_{e\in E(S_{n,k,\beta})}x^e\leq k\sum_{e\in E(S_{n,k,\beta})}|e\cap W|x^e
\leq k\sum_{w\in W}x_w\sum_{\substack{F\subseteq[n]\setminus\{w\}\\|F|=k-1}}x^F.
$$
By expanding $\left(\sum_{v\in[n]\setminus\{w\}}x_v\right)^{k-1}$ and using the nonnegativity of $x$,  we have
$$
\sum_{\substack{F\subseteq[n]\setminus\{w\}\\|F|=k-1}}x^F\leq\frac{1}{(k-1)!}\left(\sum_{v\in[n]}x_v\right)^{k-1}.
$$
By H\"older's inequality and $\|x\|_k=1$, we have
$$
\sum_{w\in W}x_w\leq\beta^{(k-1)/k}\left(\sum_{w\in W}x_w^k\right)^{1/k}\leq\beta^{(k-1)/k}
\quad\text{and}\quad
\sum_{v\in[n]}x_v\leq n^{(k-1)/k}\|x\|_k=n^{(k-1)/k}.
$$
Thus, we obtain
$$
\rho(S_{n,k,\beta})\leq\frac{k}{(k-1)!}\left(\sum_{w\in W}x_w\right)\left(\sum_{v\in[n]}x_v\right)^{k-1}
\leq\frac{k}{(k-1)!}\beta^{(k-1)/k}n^{(k-1)^2/k}.
$$
This proves the upper bound in \eqref{ieq:spec-bounds}.
\end{proof}

\section{Proof of Theorem \ref{thm:spectral-stability}}

In this section, we assume throughout that $k\geq3$, $\beta\geq2$ and $n$ is sufficiently large.
Our goal is to prove Theorem~\ref{thm:spectral-stability}.
The proof consists of two steps.

The first step is given by the following lemma.
Its proof is based on a normalized nonnegative eigenvector $x$ corresponding to $\rho(H)$.
We choose a sufficiently small constant $\eta>0$ and let $W$ be the set of vertices for which $x_v\geq\eta$.
The eigenvalue equation shows that every vertex in $W$ has large degree.
We then use this fact and the condition $\mu(H)\leq\beta$ to prove that $|W|=\beta$.
Finally, we prove that every edge of $H$ intersects $W$, which gives $H\subseteq S_{n,k,\beta}$, namely, $H$ is a sub-hypergraph of $S_{n,k,\beta}$.

\begin{lem}\label{prop:structure}
There exist constants $\delta_0>0$ and $n_0$ such that the following holds.
Let $n\geq n_0$, and let $H$ be a $k$-graph on $[n]$ satisfying
$$
\mu(H)\leq\beta\quad\text{and}\quad\rho(H)\geq(1-\delta_0)\rho(S_{n,k,\beta}).
$$
Then there exists $W\in\binom{[n]}{\beta}$ that intersects every edge of $H$. Consequently, $H\subseteq S_{n,k,\beta}$.
\end{lem}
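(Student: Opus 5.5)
Let $x$ be a normalized nonnegative eigenvector of $H$ with $\|x\|_k=1$, so \eqref{eq:eigen-equation} and \eqref{eq:rayleigh-at-eigenvector} hold, and write $\rho=\rho(H)$. By Lemma~\ref{lem:spectral-order}, $\rho=\Theta(n^{(k-1)^2/k})$. The strategy is to locate a vertex cover in $H$ consisting of $\beta$ heavy vertices, where ``heavy'' means a large eigenvector entry.

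\textbf{Step 1: a small cover.} Let $M$ be a maximum matching and $T=\bigcup_{e\in M}e$. Then $|T|\le k\beta$ and every edge of $H$ meets $T$.

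\textbf{Step 2: control of light vertices.} For any vertex $u$, Hölder gives $\sum_v x_v\le n^{(k-1)/k}$. Hence the edges containing $u$ and $j\ge 2$ vertices of $T$ contribute
$$
O\bigl(x_u\,(n^{(k-1)/k})^{k-1-j}\bigr)\le O\bigl(x_u\, n^{(k-1)(k-3)/k}\bigr)
$$
to $\rho=k\sum_e x^e$. This is $o(\rho)$, so such edges are negligible. The main contribution therefore comes from edges meeting $T$ in exactly one vertex $w$, and these contribute at most $\frac{k}{(k-1)!}\,x_w\,(\sum_v x_v)^{k-1}$.

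\textbf{Step 3: defining $W$.} Fix a small $\eta>0$ and set $W=\{v: x_v\ge\eta\}$, so $|W|\le \eta^{-k}$. From the eigenvalue equation at $v\in W$,
$$
\rho\,\eta^{k-1}\le \sum_{e\ni v} x^{e\setminus\{v\}} \le d(v)\cdot(\text{average}),
$$
and combining this with the bound $\sum_{F}x^F\le(\sum x)^{k-1}/(k-1)!$ shows that $v$ lies in $\Omega(n^{k-1})$ edges. Such a high degree vertex is robust: any matching of size at most $k\beta$ avoiding $v$ can be extended by an edge through $v$ disjoint from it, since at most $O(n^{k-2})$ edges through $v$ hit a fixed set of bounded size. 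If $|W|\ge\beta+1$, we greedily build a matching of size $\beta+1$, giving each vertex of $W$ its own edge, which contradicts $\mu(H)\le\beta$. Hence $|W|\le\beta$.

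Conversely, by Step 2 and Hölder with the vertices of $T\setminus W$ having $x_w<\eta$,
$$
\rho\le \frac{k}{(k-1)!}\Bigl(\sum_{w\in W}x_w+k\beta\eta\Bigr)n^{(k-1)^2/k}+o(\rho).
$$
Comparing with the lower bound of Lemma~\ref{lem:spectral-order} requires $\sum_{w\in W}x_w$ to be close to what $\beta$ vertices can give. Here the crude constant $k$ versus $C_k$ is too weak, so I would instead use the sharper optimization: maximize $k\sum_{w}x_w\cdot \frac{(\sum_{v\notin T}x_v)^{k-1}}{(k-1)!}$ subject to $\sum x_v^k=1$. With $s=|W|$ heavy vertices, the optimum scales like $s^{(k-1)/k}C_k n^{(k-1)^2/k}$. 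Therefore $|W|\le\beta-1$ would give $\rho\le((\beta-1)/\beta)^{(k-1)/k}(1+O(\eta))\rho(S_{n,k,\beta})$, contradicting the hypothesis for small $\delta_0,\eta$. This yields $|W|=\beta$. \textbf{This quantitative step is the main obstacle:} making the optimization bound exact up to $1+O(\eta)+o(1)$ factors, including the weight lost on $T$ and the choice of a common sum over $[n]\setminus T$.

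\textbf{Step 4: $W$ covers all edges.} Suppose an edge $f$ misses $W$. Each $w\in W$ has degree $\Omega(n^{k-1})$, so we can greedily choose $\beta$ disjoint edges, one through each $w\in W$, all avoiding $f$ (only $O(n^{k-2})$ edges through $w$ meet a bounded set). Together with $f$ this is a matching of size $\beta+1$, a contradiction. Hence $W$ meets every edge, and $H\subseteq S_{n,k,\beta}$ after relabeling $W=[\beta]$.
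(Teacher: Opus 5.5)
Your proposal is correct and follows essentially the paper's route: the threshold set $W=\{v:x_v\ge\eta\}$, greedy matchings to get $|W|\le\beta$ and the covering property, and, for $|W|\ge\beta$, the H\"older bound with $P=\sum_{w\in W}x_w^k$ and $\max_{P}P^{1/k}(1-P)^{(k-1)/k}=(k-1)^{(k-1)/k}/k$ that you only sketch and the paper carries out (the paper splits edges into those meeting $W$ and those meeting only $U\setminus W$, a cosmetic difference from your split via $T$). One small fix: the bound you cite does not control $d(v)$ (H\"older, $\sum_{F\in\mathcal F}x^F\le|\mathcal F|^{(k-1)/k}$, would give your $\Omega(n^{k-1})$), but you only need $d(v)=\omega(n^{k-2})$, and this follows from $x_u\le1$ as $d(v)\ge\rho\eta^{k-1}=\Omega(n^{k-2+1/k})$, exactly as in the paper.
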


\begin{proof}
Put $C_k=\frac{(k-1)^{(k-1)/k}}{(k-1)!}$.
Choose $\delta_0>0$ sufficiently small such that $$(1-\delta_0)\beta^{(k-1)/k}>(\beta-1)^{(k-1)/k}.$$
Let $H$ satisfy the assumptions of the proposition.
By the lower bound in \eqref{ieq:spec-bounds}, we have
\begin{equation}\label{eq:prop-rho-lower}
\rho(H)\geq(1-\delta_0)C_k\beta^{(k-1)/k}n^{(k-1)^2/k}-O\left(n^{(k-1)^2/k-1}\right).
\end{equation}

Let $x=(x_1,\ldots,x_n)$ be a nonnegative eigenvector corresponding to $\rho(H)$, normalized by $\|x\|_k=1$.
In particular, $0\leq x_v\leq1$ for every $v\in V(H)$. Choose $\eta>0$ sufficiently small such that
\begin{equation}\label{eq:eta-choice}
\frac{k^2\beta}{(k-1)!}\eta<\frac{1}{2}C_k\left((1-\delta_0)\beta^{(k-1)/k}-(\beta-1)^{(k-1)/k}\right),
\end{equation}
and set
$$
W=\{v\in V(H):x_v\geq\eta\}.
$$
By the definition of $W$ and the normalization of $x$,
$$
|W|\eta^k \leq \sum_{v\in W}x_v^k \leq \sum_{v\in V(H)}x_v^k = 1.
$$
Thus, we have $|W|\leq \eta^{-k}$.
For every $u\in W$, since $0\leq x_v\leq1$ for every $v\in V(H)$, the eigenvalue equation and \eqref{eq:prop-rho-lower} give
\begin{equation}\label{eq:large-degree}
d_H(u)\geq\sum_{\substack{e\in E(H)\\u\in e}}x^{e\setminus\{u\}}=\rho(H)x_u^{k-1}\geq(1-o(1))(1-\delta_0)C_k\beta^{(k-1)/k}\eta^{k-1}n^{(k-1)^2/k}.
\end{equation}
Thus, we have $d_H(u)=\Omega\left(n^{k-2+\frac{1}{k}}\right)$ for every $u\in W$.

We now show that $|W|\leq\beta$.
Suppose, to the contrary, that $|W|\geq\beta+1$, and choose distinct vertices $u_1,\ldots,u_{\beta+1}\in W$.
We greedily choose pairwise disjoint edges $e_1,\ldots,e_{\beta+1}$ such that $u_i\in e_i$.
Before choosing $e_i$, let $R_i=(e_1\cup\cdots\cup e_{i-1})\cup \{u_{i+1},\ldots,u_{\beta+1}\}$.
The set $R_i$ has bounded size, and the number of edges containing $u_i$ and intersecting $R_i$ is $O(n^{k-2})$.
Since
\[d_H(u_i)-O(n^{k-2})
=\Omega(n^{k-2+1/k})>0,\]
we can choose an edge $e_i$ that is disjoint from $R_i$.
This gives a matching of size $\beta+1$, contradicting $\mu(H)\leq\beta$.
Hence $|W|\leq\beta$.

We next prove $|W|=\beta$.
Suppose that $|W|\leq\beta-1$.
Let $M=\{e_1,\ldots,e_s\}$ be a maximum matching of $H$, where $s=\mu(H)\leq\beta$, and set $U=e_1\cup\cdots\cup e_s$.
Then $|U|\leq k\beta$, and the maximality of $M$ implies that every edge of $H$ intersects $U$.
Partition $E(H)$ into
$$
E_1=\{e\in E(H):e\cap W\neq\emptyset\}, \quad E_2=\{e\in E(H):e\cap W=\emptyset\}.
$$

We first estimate the contribution from $E_2$.
Since every edge in $E_2$ intersects $U\setminus W$, summing over the vertices in $U\setminus W$ contained in each edge gives
$$
k\sum_{e\in E_2}x^e\leq k\sum_{u\in U\setminus W}x_u\sum_{\substack{e\in E_2\\u\in e}}x^{e\setminus\{u\}}
\leq k\sum_{u\in U\setminus W}x_u\sum_{\substack{F\subseteq V(H)\\|F|=k-1}}x^F
\leq\frac{k}{(k-1)!}\left(\sum_{u\in U\setminus W}x_u\right)\left(\sum_{v\in V(H)}x_v\right)^{k-1},
$$
where the last inequality follows by expanding $\left(\sum_{v\in V(H)}x_v\right)^{k-1}$.
Since $x_u<\eta$ for every $u\in U\setminus W$ and $|U|\leq k\beta$, we have
$$
\sum_{u\in U\setminus W}x_u\leq k\beta\eta.
$$
On the other hand, by H\"older's inequality and $\|x\|_k=1$, we obtain
$$
\sum_{v\in V(H)}x_v\leq n^{(k-1)/k}\left(\sum_{v\in V(H)}x_v^k\right)^{1/k}=n^{(k-1)/k}.
$$
Therefore,
\begin{equation}\label{eq:E2}
k\sum_{e\in E_2}x^e\leq\frac{k^2\beta}{(k-1)!} \eta n^{(k-1)^2/k}.
\end{equation}

We next estimate the contribution from $E_1$.
Put $P=\sum_{w\in W}x_w^k$.
Then $\sum_{v\in V(H)\setminus W}x_v^k=1-P$.
The contribution from the edges in $E_1$ that intersect $W$ in exactly one vertex satisfies
$$
k\sum_{\substack{e\in E_1\\|e\cap W|=1}}x^e\leq\frac{k}{(k-1)!}\left(\sum_{w\in W}x_w\right)\left(\sum_{v\in V(H)\setminus W}x_v\right)^{k-1}.
$$
By H\"older's inequality,
$$
\sum_{w\in W}x_w \leq |W|^{(k-1)/k}P^{1/k},
\qquad
\sum_{v\in V(H)\setminus W}x_v \leq |V(H)\setminus W|^{(k-1)/k}(1-P)^{1/k}
\leq n^{(k-1)/k}(1-P)^{1/k}.
$$
Since
$$
\max_{0\leq P\leq1}P^{1/k}(1-P)^{(k-1)/k}=\frac{(k-1)^{(k-1)/k}}{k},
$$
we obtain
\begin{equation}\label{eq:E11}
k\sum_{\substack{e\in E_1\\|e\cap W|=1}}x^e\leq C_k|W|^{(k-1)/k}n^{(k-1)^2/k}.
\end{equation}

It remains to estimate the contribution from the edges in $E_1$ that intersect $W$ in at least two vertices.
Since $x_v\leq1$ for every $v\in V(H)$, the same argument gives
\begin{equation}\label{eq:E12}
k\sum_{\substack{e\in E_1\\|e\cap W|\geq2}}x^e \leq k\sum_{j=2}^{k}\binom{|W|}{j}\frac{1}{(k-j)!}n^{(k-1)(k-j)/k}=O\left(n^{(k-1)(k-2)/k}\right).
\end{equation}

Combining \eqref{eq:rayleigh-at-eigenvector}, \eqref{eq:E2}, \eqref{eq:E11}, and \eqref{eq:E12}, and using $|W|\leq\beta-1$, we obtain
$$
\rho(H)=k\sum_{e\in E_1}x^e+ k\sum_{e\in E_2}x^e
\leq \left(C_k(\beta-1)^{(k-1)/k}+\frac{k^2\beta}{(k-1)!}\eta\right)n^{(k-1)^2/k}+O\left(n^{(k-1)(k-2)/k}\right).
$$
By the choice of $\eta$, we have
$$
\begin{aligned}
C_k(\beta-1)^{(k-1)/k}+\frac{k^2\beta}{(k-1)!}\eta &<C_k(\beta-1)^{(k-1)/k}+\frac{1}{2}C_k\left((1-\delta_0)\beta^{(k-1)/k}-(\beta-1)^{(k-1)/k}\right)\\
&=\frac{1}{2}C_k\left((1-\delta_0)\beta^{(k-1)/k}+(\beta-1)^{(k-1)/k}\right)\\
&<(1-\delta_0)C_k\beta^{(k-1)/k}.
\end{aligned}
$$
The last inequality follows from
$$
(\beta-1)^{(k-1)/k} < (1-\delta_0)\beta^{(k-1)/k}.
$$
On the other hand, \eqref{eq:prop-rho-lower} gives
$$
\rho(H) \geq (1-\delta_0) C_k\beta^{(k-1)/k}n^{(k-1)^2/k}- O\left(n^{(k-1)^2/k-1}\right).
$$
Thus, the coefficient of $n^{(k-1)^2/k}$ in the upper bound for $\rho(H)$ is strictly smaller than that in the lower bound.
Since all the remaining terms have smaller order, this is a contradiction for sufficiently large $n$.
Therefore, $|W|=\beta$.

It remains to prove that every edge of $H$ intersects $W$.
Suppose, to the contrary, that there exists an edge $e'\in E(H)$ such that $e'\cap W=\emptyset$.
Write $W=\{u_1,\ldots,u_\beta\}$, and let $H'=H[V(H)\setminus e']$.
For every $u_i\in W$, at most $k\binom{n-2}{k-2}=O(n^{k-2})$ edges containing $u_i$ intersect $e'$.
Hence, by \eqref{eq:large-degree},
$$
d_{H'}(u_i) \geq d_H(u_i)-O(n^{k-2})=\Omega\left(n^{k-2+\frac{1}{k}}\right).
$$
Using the same greedy argument as above, we can choose pairwise disjoint edges $e_1,\ldots,e_\beta$ of $H'$ such that $u_i\in e_i$ for every $i\in[\beta]$. Together with $e'$, these edges form a matching of size $\beta+1$, contradicting $\mu(H)\leq\beta$.
Therefore, every edge of $H$ intersects $W$, and hence $H\subseteq S_{n,k,\beta}$.
\end{proof}

The second step is to estimate the number of edges of $ S_{n,k,\beta}$ that are missing from $H$.
The following lemma bounds this number in terms of $\rho(S_{n,k,\beta})-\rho(H)$.

\begin{lem}\label{lem:missing-edges}
There exist constants $c>0$, $\delta_1>0$, and $n_1$, where $\delta_1\leq\delta_0$, such that the
following holds. For $n\geq n_1$, let $H$ be a spanning subgraph of $S_{n,k,\beta}$ satisfying
$$
\rho(H) \geq (1-\delta_1)\rho(S_{n,k,\beta}).
$$
Then
$$
\rho(S_{n,k,\beta})-\rho(H) \geq c\left|E(S_{n,k,\beta})\setminus E(H)\right| n^{-(k-1)/k}.
$$
\end{lem}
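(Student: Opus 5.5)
Let $H\subseteq S:=S_{n,k,\beta}$ with $\rho(H)\ge(1-\delta_1)\rho(S)$, and let $D=E(S)\setminus E(H)$. The plan is to use a test vector for $S$ built from the Perron vector $x$ of $H$, with $\|x\|_k=1$. By Lemma~\ref{QiSpec} we have
$$\rho(S)\ge k\sum_{e\in E(S)}x^e=\rho(H)+k\sum_{e\in D}x^e.$$
It therefore suffices to show $k\sum_{e\in D}x^e\ge c|D|n^{-(k-1)/k}$. This reduces to a lower bound on the entries of $x$: for every $e\in D$ we want $x^e=\Omega(n^{-(k-1)/k})$.

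Each $e\in D$ meets $W=[\beta]$. Thus it suffices to prove two bounds:
\begin{itemize}
\item[(a)] $x_w\ge\eta$ for all $w\in W$, for a constant $\eta>0$;
\item[(b)] $x_v\ge c'n^{-1/k}$ for every $v\notin W$.
\end{itemize}
Given these, an edge $e\in D$ with $j\ge1$ vertices in $W$ has $x^e\ge \eta^{j}(c'n^{-1/k})^{k-j}\ge \eta^k c'^{k}n^{-(k-1)/k}$, as needed.

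For (a), I would rerun the argument of Lemma~\ref{prop:structure}. Since $\delta_1\le\delta_0$, the set $W'=\{v:x_v\ge\eta\}$ has size exactly $\beta$. Every vertex of $W'$ has degree $\Omega(n^{k-2+1/k})$ in $H\subseteq S$. A vertex outside $[\beta]$ has degree only $O(n^{k-2})$ in $S$. Hence $W'=[\beta]$.

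For (b), I would use the eigen-equation at $v\notin W$:
$$\rho(H)x_v^{k-1}=\sum_{e\ni v}x^{e\setminus\{v\}}.$$
First I need a mass bound. Reuse the estimates \eqref{eq:E11} and \eqref{eq:E12} with $|W|=\beta$ and $P=\sum_{w\in W}x_w^k$. Near-optimality of $\rho(H)$ then forces:
\begin{itemize}
\item $P$ close to $1/k$;
\item $\sum_{w\in W}x_w\ge c_1$;
\item $\sum_{u\notin W}x_u\ge c_2 n^{(k-1)/k}$, and in fact the bulk of the outside mass sits on vertices with $x_u=\Theta(n^{-1/k})$.
\end{itemize}
Here $c_1,c_2$ depend on $\delta_1$ being small, with Hölder nearly tight.

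Next I would handle the edges of $S$ at $v$ that are missing from $H$. The point is that their contribution is small relative to $\rho(H)\sim n^{(k-1)^2/k}$. If $v$ lost a constant fraction of its weighted link, $v$ would have small $x_v$, and then $\rho$ would drop. I would make this quantitative as follows. Let $B=\{v\notin W:x_v<\varepsilon n^{-1/k}\}$ be the bad set. Replacing $x_v$ for $v\in B$ by the typical value $\approx((k-1)/(k(n-\beta)))^{1/k}$ and comparing with the optimal vector of $S$ shows that $|B|$ forces a deficit $\rho(S)-\rho(H)\gtrsim |B|\, n^{(k-1)^2/k-1}$.

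Then I would bootstrap using the eigen-equation. A typical vertex $v\notin W$ still has, for a positive fraction, edges $\{w\}\cup F$ in $H$ with $w\in W$ and $F$ drawn from good vertices. This gives
$$\rho(H)x_v^{k-1}\gtrsim \eta\,(c''n^{-1/k})^{k-2}n^{k-2},$$
and hence $x_v=\Omega(n^{-1/k})$.

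\textbf{Main obstacle.} The issue is vertices $v\notin W$ whose $H$-degree has collapsed. Their $x_v$ may be tiny, so (b) fails pointwise. For those I would not use (b). Instead I would charge their missing edges directly: each such $v$ misses $\Theta(n^{k-2})$ edges of $S$. I would then compare $\rho(S)$ against the test vector obtained by resetting $x_v$ to the typical value on those vertices. This should give a gain of order $n^{k-2}\cdot n^{-(k-1)/k}$ per bad vertex, which is consistent with the desired bound, since at most $O(n^{k-2})$ missing edges contain $v$.

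Combining the direct Rayleigh estimate on edges among good vertices with this charging argument for bad vertices yields the lemma with some $c>0$.
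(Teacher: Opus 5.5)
Your route differs genuinely from the paper's and can be completed. As written, though, the opening reduction fails on its own, and the step meant to rescue it is only asserted. Delete from $S=S_{n,k,\beta}$ all edges through one vertex $v\notin W$. For large $n$ this hypergraph still satisfies the hypothesis, yet the eigen-equation forces $x_v=0$. So (b) fails, and the $\Theta(n^{k-2})$ edges of $D$ at $v$ contribute nothing to $k\sum_{e\in D}x^e$.

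Drop (b) and the bootstrap altogether. Keep your bad set $B=\{v\notin W:x_v<\varepsilon n^{-1/k}\}$; then (a) already gives $x^e\ge\min\{\eta,\varepsilon\}^k n^{-(k-1)/k}$ for every $e\in D$ avoiding $B$. Everything now rests on the reset step, where the gain and the normalization cost have the \emph{same} order. Put $z_v=t=\tau n^{-1/k}$ on $B$ and $z=x$ elsewhere, so $z\ge x$ once $\varepsilon\le\tau/2$. Then Lemma~\ref{QiSpec} gives
\[
\rho(S)-\rho(H)\ \ge\ \frac{k\sum_{e\in D,\,e\cap B=\emptyset}x^e+\sum_{v\in B}\bigl(k(t-x_v)L_v-\rho(H)\,t^k\bigr)}{1+\sum_{v\in B}t^k},
\]
where $L_v=\sum x^F$ over $(k-1)$-sets $F$ with $F\cap B=\emptyset$ and $F\cup\{v\}\in E(S)$.

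Both $k\,tL_v$ and $\rho(H)t^k$ are of order $n^{(k-1)^2/k-1}$, so positivity is not automatic. You must prove $L_v\ge \ell\, n^{(k-1)(k-2)/k}$ uniformly over $v\in B$. This follows from three facts: $\sum_{w\in W}x_w\ge c_1$; $\sum_{u\notin W\cup B}x_u\ge (c_2-\varepsilon)n^{(k-1)/k}$; and $x_u<\eta$ off $W$, which makes $e_{k-2}$ of these entries comparable to $(\sum x_u)^{k-2}/(k-2)!$. Then take $\tau$ so small that $k\tau\ell/2>C\tau^k$, where $\rho(H)\le Cn^{(k-1)^2/k}$.

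Your ``typical value'' for $t$ would instead need the sharper closeness of $x$ to the extremal profile, which you assert but do not prove. Finally, at most $\beta|B|\binom{n}{k-2}$ edges of $D$ meet $B$, and $n^{(k-1)^2/k-1}=n^{k-2}\,n^{-(k-1)/k}$, so the per-vertex gain covers them and your argument closes.

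The paper avoids all of this because it never uses $x$ as a test vector for $S$. It splits $E(H)$ into $E_j=\{e:|e\cap W|=j\}$ and applies H\"older with the edge count, $\sum_{e\in E_j}x^e\le|E_j|^{(k-1)/k}\bigl(\sum_{e\in E_j}(x^e)^k\bigr)^{1/k}$. The last sum is bounded via Maclaurin in terms of $P=\sum_{w\in W}x_w^k$. Concavity, $(1-s)^{(k-1)/k}\le 1-\frac{k-1}{k}s$, then turns each missing edge with $|e\cap W|=j$ into an explicit loss $(k-1)(P/\beta)^{j/k}\bigl((1-P)/(n-\beta)\bigr)^{(k-j)/k}$ below $F_n(P)$. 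Since $F_n(P)$ is the value of $S$ at the two-level vector $y$, it is at most $\rho(S)$. The only structural input is $1/(2k)<P<1-1/(2k)$. Missing edges are penalized through $|E_j|$ rather than through $x^e$, so vertices with $x_v=0$ are harmless. No pointwise bounds, good/bad split, tuning of constants, or appeal to the proof of Lemma~\ref{prop:structure} is needed. Your approach, once completed, gives extra information about the Perron vector (e.g.\ $x_w\ge\eta$ on $W$ and a count of low-weight vertices), but it is longer and more delicate.
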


\begin{proof}
Put $W=[\beta]$. 
Then, by definition, every edge of $H$ intersects $W$.
%By Lemma~\ref{prop:structure}, there exists $W\in\binom{[n]}{\beta}$ such that
%After relabeling the vertices, we may assume that.
%Then $H$ is a spanning subgraph of $S_{n,k,\beta}$.
Let $x=(x_1,\ldots,x_n)$ be a nonnegative eigenvector corresponding to $\rho(H)$, normalized by $\|x\|_k=1$, and put $P=\sum_{w\in W}x_w^k$.
Then $0\leq P\leq1$ and $1-P=\sum_{v\in[n]\setminus W}x_v^k$.

For $1\leq j\leq\min\{k,\beta\}$, Maclaurin's inequality and the power mean inequality give
$$
\sum_{A \in \binom{W}{j}}x^A \leq \binom{\beta}{j}\left(\frac{1}{\beta}\sum_{w\in W}x_w\right)^j \leq \binom{\beta}{j}\left(\frac{P}{\beta}\right)^{j/k}.
$$
Similarly,
$$
\sum_{B \in \binom{[n]\setminus W}{k-j}}x^B \leq \binom{n-\beta}{k-j}\left(\frac{1}{n-\beta}\sum_{v\in[n]\setminus W}x_v\right)^{k-j} \leq \binom{n-\beta}{k-j}\left(\frac{1-P}{n-\beta}\right)^{(k-j)/k}.
$$
Since $H\subseteq S_{n,k,\beta}$, we obtain
$$
\rho(H)=k\sum_{e\in E(H)}x^e \leq k\sum_{e\in E(S_{n,k,\beta})}x^e \leq k\sum_{j=1}^{\min\{k,\beta\}}\binom{\beta}{j}\binom{n-\beta}{k-j}\left(\frac{P}{\beta}\right)^{j/k}\left(\frac{1-P}{n-\beta}\right)^{(k-j)/k}.
$$
Denote the expression on the right, regarded as a function of $P\in[0,1]$, by $F_n(P)$.
Then, the above inequality can be written as $\rho(H)\leq F_n(P)$.
Define $y=(y_1,\ldots,y_n)\in\mathbb R_+^n$ by
$$
y_v=
\begin{cases}
\left(\dfrac{P}{\beta}\right)^{1/k},
& v\in W,\\[6pt]
\left(\dfrac{1-P}{n-\beta}\right)^{1/k},
& v\in[n]\setminus W.
\end{cases}
$$
Then $\|y\|_k=1$ and $k\sum_{e\in E(S_{n,k,\beta})}y^e=F_n(P)$.
Therefore, by the definition of the tensor spectral radius, we have
\begin{equation}\label{eq:Fn-upper}
F_n(P)\leq\rho(S_{n,k,\beta})
\end{equation}
for every $P\in[0,1]$.

The term with $j=1$ gives the main term of $F_n(P)$, while the sum of the terms with $j\geq2$ is $O(n^{(k-1)(k-2)/k})$.
It follows that
\begin{equation}\label{eq:Fn-limit}
n^{-(k-1)^2/k}F_n(P)=\frac{k\beta^{(k-1)/k}}{(k-1)!}P^{1/k}(1-P)^{(k-1)/k}+o(1),
\end{equation}
where the term $o(1)$ does not depend on $P\in[0,1]$.

We claim that
\begin{equation}\label{eq:P-bounded}
\frac{1}{2k}<P<1-\frac{1}{2k}.
\end{equation}
Suppose, to the contrary, that $P\in[0,1/(2k)]\cup[1-1/(2k),1]$.
Note that the function $P^{1/k}(1-P)^{(k-1)/k}$ has a unique maximum at $P=1/k$.
%Hence, by \eqref{eq:Fn-limit}, if $P\in[0,1/(2k)]\cup[1-1/(2k),1]$,
We may choose $\delta_1>0$ sufficiently small and $n_1$ sufficiently large such that, for every $n\geq n_1$,
$$
F_n(P)\leq(1-2\delta_1)F_n(1/k).
$$
By \eqref{eq:Fn-upper} and the assumption on $\rho(H)$, we have
$$
\rho(H)\leq F_n(P)\leq(1-2\delta_1)F_n(1/k)\leq(1-2\delta_1)\rho(S_{n,k,\beta})<(1-\delta_1)\rho(S_{n,k,\beta})\leq\rho(H),
$$
a contradiction.

For $1\leq j\leq\min\{k,\beta\}$, let
$$
E_j=\{e\in E(H):|e\cap W|=j\},
$$
and let $m_j$ be the number of edges $e\in E(S_{n,k,\beta})\setminus E(H)$ satisfying $|e\cap W|=j$.
Since $H\subseteq S_{n,k,\beta}$, we have
$$
m_j=\binom{\beta}{j}\binom{n-\beta}{k-j}-|E_j|.
$$
By the definition of $E_j$ and Maclaurin's inequality,
$$
\sum_{e\in E_j}(x^e)^k \leq \left(\sum_{A \in \binom{W}{j}}(x^A)^k\right)
\left(\sum_{B \in \binom{[n]\setminus W}{k-j}}(x^B)^k\right) \leq \binom{\beta}{j}\binom{n-\beta}{k-j}\left(\frac{P}{\beta}\right)^j\left(\frac{1-P}{n-\beta}\right)^{k-j}.
$$
By H\"older's inequality,
$$
\sum_{e\in E_j}x^e \leq |E_j|^{(k-1)/k}\left(\sum_{e\in E_j}(x^e)^k\right)^{1/k} \leq |E_j|^{(k-1)/k}\left[\binom{\beta}{j}\binom{n-\beta}{k-j}\left(\frac{P}{\beta}\right)^j\left(\frac{1-P}{n-\beta}\right)^{k-j}\right]^{1/k}.
$$
Since $|E_j|=\binom{\beta}{j}\binom{n-\beta}{k-j}-m_j$, applying $(1-s)^{(k-1)/k}\leq1-\frac{k-1}{k}s$ for $0\leq s\leq1$ gives
$$
\binom{\beta}{j}\binom{n-\beta}{k-j}\left(\frac{|E_j|}{\binom{\beta}{j}\binom{n-\beta}{k-j}}\right)^{(k-1)/k} \leq \binom{\beta}{j}\binom{n-\beta}{k-j}-\frac{k-1}{k}m_j.
$$
Since $E(H)=\bigcup_{j=1}E_j$, we obtain
\begin{equation}\label{eq:missing-loss}
\begin{aligned}
\rho(H) &= k\sum_{j=1}^{\min\{k,\beta\}}\sum_{e\in E_j}x^e\\
&\leq k\sum_{j=1}^{\min\{k,\beta\}}|E_j|^{(k-1)/k}\left[\binom{\beta}{j}\binom{n-\beta}{k-j}\left(\frac{P}{\beta}\right)^j\left(\frac{1-P}{n-\beta}\right)^{k-j}\right]
^{1/k}\\
&=
k\sum_{j=1}^{\min\{k,\beta\}}\binom{\beta}{j}\binom{n-\beta}{k-j}\left(\frac{|E_j|}{\binom{\beta}{j}\binom{n-\beta}{k-j}}\right)^{(k-1)/k}
\left(\frac{P}{\beta}\right)^{j/k}\left(\frac{1-P}{n-\beta}\right)^{(k-j)/k}\\
&\leq
k\sum_{j=1}^{\min\{k,\beta\}}\left[\binom{\beta}{j}\binom{n-\beta}{k-j}-\frac{k-1}{k}m_j\right]\left(\frac{P}{\beta}\right)^{j/k}\left(\frac{1-P}{n-\beta}\right)
^{(k-j)/k}\\
&=
F_n(P)-(k-1)\sum_{j=1}^{\min\{k,\beta\}}m_j\left(\frac{P}{\beta}\right)^{j/k}\left(\frac{1-P}{n-\beta}\right)^{(k-j)/k}.
\end{aligned}
\end{equation}

By \eqref{eq:P-bounded} and $n-\beta\leq n$, for every $1\leq j\leq\min\{k,\beta\}$,
$$
\left(\frac{P}{\beta}\right)^{j/k}\left(\frac{1-P}{n-\beta}\right)^{(k-j)/k} \geq \left(\frac{1}{2k\beta}\right)^{j/k}\left(\frac{1}{2kn}\right)^{(k-j)/k} \geq \frac{1}{2k\beta}n^{-(k-1)/k}.
$$
Combining \eqref{eq:Fn-upper} and \eqref{eq:missing-loss}, we obtain
$$
\rho(H)\leq\rho(S_{n,k,\beta})-\frac{k-1}{2k\beta}n^{-(k-1)/k}\sum_{j=1}^{\min\{k,\beta\}}m_j.
$$
Finally,
$$
\sum_{j=1}^{\min\{k,\beta\}}m_j
=
\left|E(S_{n,k,\beta})\setminus E(H)\right|.
$$
Therefore,
$$
\rho(S_{n,k,\beta})-\rho(H)\geq\frac{k-1}{2k\beta}\left|E(S_{n,k,\beta})\setminus E(H)\right|n^{-(k-1)/k}.
$$
The proof is complete by taking $c=(k-1)/(2k\beta)$ and choosing $n_1$ sufficiently large.
\end{proof}

We now combine Lemma~\ref{prop:structure} and Lemma~\ref{lem:missing-edges} to prove Theorem~\ref{thm:spectral-stability}.

\begin{proof}[\bf Proof of Theorem~\ref{thm:spectral-stability}]
Let $\delta\geq0$ be sufficiently small such that $\delta\leq\min\{\delta_0,\delta_1\}$.
Since
$$
\rho(H)\geq (1-\delta)\rho(S_{n,k,\beta})\geq (1-\delta_0)\rho(S_{n,k,\beta}),
$$
Proposition~\ref{prop:structure} gives a set $W\in\binom{[n]}{\beta}$ such that $H\subseteq S_{n,k,\beta}$.
Since $\delta\leq\delta_1$, Lemma~\ref{lem:missing-edges} gives
$$
c\left|E(S_{n,k,\beta})\setminus E(H)\right|n^{-(k-1)/k}\leq \rho(S_{n,k,\beta})-\rho(H)\leq \delta\rho(S_{n,k,\beta}).
$$
By Lemma~\ref{lem:spectral-order}, there exists a constant $C_0=C_0(k,\beta)>0$ such that
$$
\rho(S_{n,k,\beta})\leq C_0n^{(k-1)^2/k}.
$$
Consequently,
$$
\left|E(S_{n,k,\beta})\setminus E(H)\right| \leq \frac{C_0}{c}\delta n^{k-1}.
$$
Finally, since $H\subseteq S_{n,k,\beta}$,
$$
\left|E(H)\triangle E(S_{n,k,\beta})\right| = \left|E(S_{n,k,\beta})\setminus E(H)\right|.
$$
The proof follows by taking $C=C_0/c$.
\end{proof}

\begin{proof}[\bf Proof of Corollary~\ref{cor:spec-emc}]
Suppose, to the contrary, that the desired inequality fails; that is $\rho(H)>\rho(S_{n,k,\beta})$.
Applying Theorem~\ref{thm:spectral-stability} with $\delta=0$, we obtain a set $W\in\binom{[n]}{\beta}$ such that every edge of $H$ intersects $W$. After relabeling the vertices of $H$ so that $W=[\beta]$, we have $\left|E(H)\triangle E(S_{n,k,\beta})\right|=0$.
It follows that $H\cong S_{n,k,\beta}$, and hence $\rho(H)=\rho(S_{n,k,\beta})$, a contradiction.
Therefore,
$$
\rho(H)\leq\rho(S_{n,k,\beta}).
$$
If equality holds, Theorem~\ref{thm:spectral-stability} with $\delta=0$ also gives $H\cong S_{n,k,\beta}$.
Conversely, if $H\cong S_{n,k,\beta}$, then equality clearly holds.
\end{proof}

\end{document}